\newtheorem*{citethm}{Theorem}
\newtheorem{thm}{Theorem}
\newtheorem{prop}{Proposition}
\theoremstyle{definition}
\newtheorem*{rem}{Remark}
\newtheorem*{ex}{Example}
\providecommand{\RR}{\mathbb{R}}
\providecommand{\CC}{\mathbb{C}}
\providecommand{\ZZ}{\mathbb{Z}}
\providecommand{\NN}{\mathbb{N}}
\DeclareMathOperator{\sym}{Sym}
\providecommand{\g}{{\mathfrak{g}}}
\begin{document}
\title{Maximal quadratic modules on $\ast$-rings}{}
\author{J. Cimpri\v c}
\keywords{rings with involution, ordered structures, noncommutative real algebraic geometry}
\subjclass[2000]{Primary: 16W80, 13J30; Secondary: 14P10, 12D15}
\date{April 6th 2005}
\address{Cimpri\v c Jakob, University of Ljubljana, Faculty of Mathematics and Physics,
Department of Mathematics, Jadranska 19, SI-1000 Ljubljana, Slovenia,
cimpric@fmf.uni-lj.si,http://www.fmf.uni-lj.si/\~{}cimpric}

\maketitle

\begin{abstract}
We generalize the notion of and results on maximal proper quadratic modules 
from commutative unital rings to $\ast$-rings and discuss the relation of this 
generalization to recent developments in noncommutative real algebraic geometry.
The simplest example of a maximal proper quadratic module is the cone 
of all positive semidefinite complex matrices of a fixed dimension.
We show that the support of a maximal proper quadratic module
is the symmetric part of a prime $\ast$-ideal, that every maximal
proper quadratic module in a Noetherian $\ast$-ring comes from
a maximal proper quadratic module in a simple artinian ring with involution
and that maximal proper quadratic modules satisfy an intersection theorem.
As an application we obtain the following extension of Schm\" udgen's 
Strict Positivstellensatz for the Weyl algebra: 
Let $c$ be an element of the Weyl algebra $\mathcal{W}(d)$ 
which is not negative semidefinite in the Schr\" odinger representation.
It is shown that under some conditions there exists an integer $k$
and elements $r_1,\ldots,r_k \in \mathcal{W}(d)$ such that
$\sum_{j=1}^k r_j c r_j^\ast$ is a finite sum of hermitian squares.
This result is not a proper generalization however because we
don't have the bound $k \le d$. 
\end{abstract}

\section{Introduction}
\label{firstsec}

The aim of this note is to generalize the notion of and results on quadratic modules from 
commutative unital rings to associative unital rings with involution, which we call $\ast$-rings.
The study of quadratic modules in $\ast$-rings is suggested by the recent developments in 
noncommutative real algebraic geometry, see \cite{av}, \cite{h1}, \cite{pc}, \cite{sch1}, \cite{sch2}.

Commutative real algebraic geometry is based on the notion of an
ordering and quadratic modules are considered just  a technical tool.
However, an attempt by M. Marshall to build a noncommutative 
real algebraic geometry on $\ast$-orderings in \cite{mm} showed
that there is not enough of them. The advantage of maximal proper
quadratic modules over $\ast$-orderings is not only in their quantity
but also in their connection to the representation theory
of $\ast$-rings, see \cite{cstar}.

The exposition can be divided into three parts.
In the first part (Sections \ref{defsec} and \ref{suppsec}) we define
a quadratic module in a $\ast$-ring and provide elementary examples.
We also try to generalize the following result
(cf. \cite[1.1.4 Theorem]{walter}): If $M$ is a maximal proper quadratic
module in a commutative ring $R$ with trivial involution, then
$M \cup -M = R$ and $M \cap -M$ is a prime ideal. It will
be shown that the first property cannot be generalized but the second can.

In the second part (Sections \ref{fracsec} and \ref{repsec}) we show that
every maximal quadratic module in a Noetherian $\ast$-ring comes from
a maximal quadratic module in a simple artinian ring with involution
using a variant of Goldie's theory from \cite{domo}. In the commutative
case, this is rather obvious. Namely, by factoring out the prime ideal $M \cap -M$
we get a maximal proper quadratic module in $R/M \cap -M$ and by passing
to the field $F$ of fraction of $R/M \cap -M$ we get a maximal proper quadratic
module in $F$, both times in a natural way.

In the third part (Sections \ref{intsec} and \ref{appsec}) we generalize the following
intersection theorem (see \cite[1.8 Satz]{jacobi}): An element $r$ of a commutative ring
$R$ with trivial involution belongs to $R \setminus -M$ for every maximal proper quadratic
module $M$ in $R$ if and only if there exist elements $m,t \in R$ which are sums of squares 
of elements from $R$ and satisfy $tr=1+m$. As an application, we obtain the following
extension of Schm\" udgen's Strict Positivstellensatz for the Weyl algebra
(see \cite[Theorem 1.1]{sch1}): 

\begin{citethm} 
Let $\mathcal{W}(d)$ be the $d$-th Weyl algebra with the natural involution
and let $\pi_0$ be its Schr\" odinger representation. If $c$ is a symmetric element
of $\mathcal{W}(d)$ of even degree $2m$ then the following are equivalent:
\begin{enumerate}
\item $\pi_0(c)$ is not negative semidefinite and 
the highest degree part $c_{2m}(z,\overline{z})$ of $c$
is  strictly positive  for all $z \in \mathbb{C}^d$, $z \ne 0$.
\item There exist elements $r_1,\ldots,r_k, s_0,s_1, \ldots,s_l \in \mathcal{W}(d)$
such that 
$
\sum_{j=1}^k r_j c r_j^\ast = \sum_{i=0}^l s_i s_i^\ast
$
and $\pi_0(s_0)$ is invertible and $\deg(s_0) \ge \deg(s_j)$ for every $j=1,\ldots,l$.
\end{enumerate}
\end{citethm}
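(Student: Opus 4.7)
The plan is to split the equivalence into its two directions, with (1)$\Rightarrow$(2) being the substantive content and drawing on the intersection theorem proved in Section~\ref{intsec}.

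For (2)$\Rightarrow$(1), I would apply the $\ast$-representation $\pi_0$ to the identity $\sum_{j} r_j c r_j^\ast = \sum_{i} s_i s_i^\ast$ and obtain
\[
\sum_{j} \pi_0(r_j)\,\pi_0(c)\,\pi_0(r_j)^\ast = \sum_{i} \pi_0(s_i)\,\pi_0(s_i)^\ast \;\ge\; \pi_0(s_0)\,\pi_0(s_0)^\ast.
\]
Invertibility of $\pi_0(s_0)$ renders the right side strictly positive, so $\pi_0(c)$ cannot be negative semidefinite, else every term on the left would be $\le 0$. For the condition on $c_{2m}$, I would pass to the associated graded (commutative) algebra with respect to the Bernstein filtration and compare principal symbols. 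The degree-dominance of $s_0$ combined with invertibility of $\pi_0(s_0)$ should force the symbol $\sigma(s_0)(z,\overline{z})$ to be nonzero on $\mathbb{C}^d\setminus\{0\}$, so that the top-degree symbol of the right side, which is bounded below by $|\sigma(s_0)|^2$, is strictly positive there; on the left it factors as a nonnegative polynomial times $c_{2m}(z,\overline{z})$, from which $c_{2m}(z,\overline{z})>0$ for $z\neq 0$ follows.

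For (1)$\Rightarrow$(2), the plan is to invoke the $\ast$-analogue of the intersection theorem from Section~\ref{intsec}: it suffices to show that $-c$ avoids every maximal proper quadratic module $M$ in $\mathcal{W}(d)$. By the structure theorems of Sections~\ref{fracsec}--\ref{repsec}, each such $M$ comes from a MPQM in a simple artinian $\ast$-ring obtained from $\mathcal{W}(d)$, and in this setting these should reduce to two classes: MPQMs associated to the Schrödinger representation $\pi_0$, for which $-c\in M$ would force $\pi_0(c)$ to be negative semidefinite---ruled out by the first clause of (1); and MPQMs arising ``at infinity'' via the associated graded algebra, where positivity is governed by the principal symbol, so that $-c\in M$ would force $c_{2m}(z,\overline{z})\le 0$ at some $z\ne 0$---ruled out by the second clause of (1). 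Having verified the hypothesis of the intersection theorem, one obtains an identity of the form $\sum r_j c r_j^\ast = 1 + \sum t_i t_i^\ast$, and a final filtration/rescaling argument absorbs the constant $1$ into a term $s_0 s_0^\ast$ with $\pi_0(s_0)$ invertible and $\deg(s_0)$ dominating the remaining $\deg(s_j)$.

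The chief obstacle is the classification, or at least the sufficient control, of MPQMs in $\mathcal{W}(d)$. Establishing that the two families (``Schrödinger'' and ``at infinity'') essentially exhaust them requires a careful specialization of the Goldie-type machinery of Sections~\ref{fracsec}--\ref{repsec} to a simple noetherian $\ast$-domain admitting only unbounded $\ast$-representations, together with compatibility between the Bernstein filtration and the involution. A secondary difficulty---likely responsible both for the ``under some conditions'' caveat in the statement and for the failure of the sharp bound $k\le d$---is extracting from the abstract intersection identity the precise invertibility and degree properties of $s_0$ demanded in (2).
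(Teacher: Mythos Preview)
Your outline for $(2)\Rightarrow(1)$ is broadly right for the first conclusion, but the step ``invertibility of $\pi_0(s_0)$ forces $\sigma(s_0)(z,\bar z)\neq 0$ on $\CC^d\setminus\{0\}$'' is not justified: invertibility of a differential operator on $L^2$ does not in general imply ellipticity. The paper in fact proves the Section~\ref{appsec} version in which $s_0$ lies in the multiplicative set $\mathcal{N}$ generated by the elements $N+(\alpha+n)1$; from that one computes the symbol of $s_0^\ast s_0$ explicitly as $(\sum_n z_n\bar z_n)^t$, and the positivity of $c_{2m}$ follows by comparing top-degree parts as you describe.

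The serious gap is in $(1)\Rightarrow(2)$. You propose to apply Theorem~\ref{nicht} directly in $\mathcal{W}(d)$ and to control all maximal proper quadratic modules via the structure results of Sections~\ref{fracsec}--\ref{repsec}. But $\mathcal{W}(d)$ is a simple Noetherian domain, so $J_M=0$ for every maximal $M$, and Theorem~\ref{rep} merely tells you that $M$ comes from a maximal proper quadratic module in the Weyl division ring --- which is no reduction at all. There is no available dichotomy of such $M$ into ``Schr\"odinger type'' and ``at infinity''; that is not what the structure theory provides, and the paper does not attempt it.

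What the paper actually does is pass, following Schm\"udgen, to an auxiliary $\ast$-algebra $\mathcal{X}$ obtained by adjoining the inverses $y_n=\Phi(N+(\alpha+n)1)^{-1}$ inside the Fock--Bargmann representation. The point is that $N(\mathcal{X})$ is \emph{archimedean} (\cite[Lemma~3.1]{sch1}), so one can apply Theorem~\ref{another} rather than Theorem~\ref{nicht}: the obstruction to $\sum_i h_i\tilde c h_i^\ast\in 1+N(\mathcal{X})$ is an $N(\mathcal{X})$-positive irreducible $\ast$-representation of $\mathcal{X}$ making $\tilde c$ negative semidefinite. Schm\"udgen's analysis (\cite[Sections~4--5]{sch1}) decomposes any such representation as $\pi_1\oplus\pi_\infty$, with $\pi_1$ a sum of identity representations and $\pi_\infty$ an integral over the sphere $S^d$ governed by the top symbol $c_{2m}$ --- exactly the two families you were hoping for, but realized as representations of $\mathcal{X}$, not as MPQMs of $\mathcal{W}(d)$. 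Hypothesis~(1) then rules out both pieces. Finally, clearing denominators via the relations $\Phi(a_j)y_k=y_{k+1}\Phi(a_j)$ brings the identity back to $\mathcal{W}(d)$ and automatically produces $s_0\in\mathcal{N}$ with the required invertibility and degree dominance; no separate ``rescaling'' step is needed.
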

An early version of this manuscript was presented in Luminy and Saskatoon in March 2005.
Meanwhile, our techniques have also been applied to the free $\ast$-algebra, see \cite{ks}.

\section{Definitions and elementary examples}
\label{defsec}

Let $R$ be a $\ast$-ring 
and $\sym(R) := \{r \in R \vert \ r=r^\ast\}$ its set of symmetric elements. 
When no confusion is possible we write $S$ for $\sym(R)$.
A subset $M$ of $R$ is a \textit{quadratic module} if $M \subseteq S$, 
$1 \in M$, $M+M \subseteq M$ and $rMr^\ast \subseteq M$ for every $r \in R$.
(This is very similar to the definition of an \textit{m-admissible wedge} 
in \cite[page 22]{schbook}.)
A quadratic module $M$ is \textit{proper} if $-1 \not\in M$. The smallest quadratic
module in $R$ is $N(R) := \{\sum_i r_i r_i^\ast \ \colon \ r_i \in R\}$. 
The quadratic module $N(R)$ need not be proper. If $N(R)$ is proper,
then $R$ is \textit{semireal}.
A proper quadratic module is \textit{maximal} if it is not contained in any
strictly larger proper quadratic module.

\begin{rem} The following properties of $R$ are equivalent:
\begin{enumerate}
\item $R$ is semireal, 
\item $R$ has at least one proper quadratic module,
\item $R$ has at least one maximal proper quadratic module.
\end{enumerate}
\end{rem}

\begin{ex} Let $H$ be a complex Hilbert space and $B(H)$ the algebra of all bounded operators on $H$
with the standard involution. Then the set $P(H)$ of all positive definite operators from $H$ is clearly a 
proper quadratic module in $B(H)$. 

If $H$ is finite dimensional, then $P(H)$ is maximal.
Namely, if $A$ is a symmetric operator which is not in $P(H)$ then we can find a basis
of $H$ such that the matrix of $A$ is diagonal with first entry equal to $-1$. Let $E_{ij}$ be the
operator whose matrix in this basis has $(i,j)$-th entry equal to $1$ and all other entries equal to zero.
Note that $\sum_k E_{k1} A E_{k1}^\ast = -I$ where $I$ is the identity operator on $H$.
Hence, every quadratic module that contains $A$ also contains $-I$. 
Therefore, there is no proper quadratic module stricly larger then $P(H)$. 
Note that in the finite dimensional case $P(H) = N(B(H))$,
hence $P(H)$ is the only proper quadratic module in $B(H)$.

If $H$ is infinite dimensional, then $P(H)$ is not maximal.
Namely if $K_s$ is the set of all symmetric compact operators on $H$, then $M = P(H)+K_s$
is a proper quadratic module in $B(H)$ that is strictly larger than $P(H)$. If $-1 \in M$ then
there exists $A \in P(H)$ such that $-I-A$ is compact. This is not possible because the eigenvalues
of a compact operator tend to zero while the eigenvalues of $-I-A$ are bounded away from zero.
\end{ex}

\begin{rem}
If $R$ is a commutative unital ring with trivial involution (i.e. $\sym(R)=R$) 
and $M$ is a maximal proper quadratic module in $R$, then $M \cup -M = \sym(R)$. 
This property fails in the noncommutative case. Namely, if $H$ is a finite dimensional 
Hilbert space of dimension at least two then $P(H)$ is a maximal proper quadratic module 
on $B(H)$ such that $P(H) \cup -P(H) \ne \sym(B(H))$. 
\end{rem}

\section{The support}
\label{suppsec}

Let $M$ be a proper quadratic module in a $\ast$-ring $R$. The set
\[
M^0 = M \cap -M
\]
is called \textit{the support} of $M$. We will frequently use the following properties of $M^0$:
$M^0+M^0 \subseteq M^0$, $-M^0 \subseteq M^0$, $r M^0 r^\ast \subseteq M^0$ for every
$r \in R$, if $x+y \in M^0$ and $x,y \in M$ then $x \in M^0$ and $y \in M^0$. Write 
\[
J_M = \{ a \in R \vert \ a u u^\ast a^\ast \in M^0 \text{ for all } u \in R\}.
\]

\begin{prop}
If $M$ is a proper quadratic module in $R$, then $J_M$ is a two-sided
ideal in $R$ containing the set $M^0$.
\end{prop}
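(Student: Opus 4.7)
The plan is to break the proof into two parts: first, verify that $J_M$ is a two-sided ideal of $R$; second, verify the inclusion $M^0 \subseteq J_M$. Both parts rely on the listed properties of $M^0$, in particular the cancellation property that $x + y \in M^0$ with $x, y \in M$ forces $x, y \in M^0$.

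For the ideal structure, the multiplicative cases are easy. If $a \in J_M$ and $r \in R$, then for every $u \in R$,
\[
(ar) u \bigl((ar) u\bigr)^{\ast} = a(ru)(ru)^{\ast} a^{\ast} \in M^0
\]
by applying the definition of $J_M$ to $ru$ in place of $u$, while
\[
(ra) u \bigl((ra) u\bigr)^{\ast} = r \bigl(a u u^{\ast} a^{\ast}\bigr) r^{\ast} \in r M^0 r^{\ast} \subseteq M^0.
\]
Closure of $J_M$ under addition is subtler, because elements of $J_M$ need not be symmetric. For $a, b \in J_M$ I would use the parallelogram identity
\[
(a+b) u u^{\ast} (a+b)^{\ast} + (a-b) u u^{\ast} (a-b)^{\ast} = 2\bigl(a u u^{\ast} a^{\ast} + b u u^{\ast} b^{\ast}\bigr),
\]
which places the right-hand side in $M^0$; both summands on the left are hermitian squares and so lie in $M$, and the cancellation property then puts each of them individually in $M^0$, giving $a+b \in J_M$.

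For $M^0 \subseteq J_M$, fix $a \in M^0$ and $u \in R$. The strategy is to polarize the closure $r M^0 r^{\ast} \subseteq M^0$: applying it to $s + t$ in place of $r$, and subtracting off $s a s^{\ast}, t a t^{\ast} \in M^0$, it yields
\[
s a t^{\ast} + t a s^{\ast} \in M^0 \qquad \text{for all } s, t \in R.
\]
The main obstacle is choosing $s, t$ so that this identity collapses to a scalar multiple of $a u u^{\ast} a^{\ast}$. Taking $s = 1$ and $t = a u u^{\ast}$, and exploiting $a^{\ast} = a$ to compute $(a u u^{\ast})^{\ast} = u u^{\ast} a$, both cross-terms equal $a u u^{\ast} a^{\ast}$; this gives $2 a u u^{\ast} a^{\ast} \in M^0$, hence $-2 a u u^{\ast} a^{\ast} \in M$. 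Since $a u u^{\ast} a^{\ast} = (au)(au)^{\ast} \in N(R) \subseteq M$, adding these yields $-a u u^{\ast} a^{\ast} \in M$, whence $a u u^{\ast} a^{\ast} \in M^0$ as required.
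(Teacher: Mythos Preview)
Your proof is correct and follows essentially the same approach as the paper. The additive and multiplicative closure arguments for $J_M$ are identical (you might add one word noting that the parallelogram identity also gives $a-b\in J_M$, or that $-a\in J_M$ is trivial); for $M^0\subseteq J_M$ the paper uses the specific conjugators $1\pm a z$ with $z=uu^\ast$ to obtain $4aza\in M^0$, whereas you first derive the general polarization $sat^\ast+tas^\ast\in M^0$ and then specialize to $s=1$, $t=az$ to get $2aza\in M^0$ --- the same idea, slightly repackaged.
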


\begin{proof}
If $a,b \in J_M$ then for every $u \in R$, $a u u^\ast a^\ast \in M^0$ and 
$b u u^\ast b^\ast \in M^0$. Write $x = (a+b) u u^\ast (a+b)^\ast$
and $y = (a-b) u u ^\ast (a-b)^\ast$. Since $x+y =2(a u u^\ast a^\ast 
+b u u^\ast b^\ast) \in M^0$ and $x,y \in N(R) \subseteq M$, it follows
that $x \in M^0$ and $y \in M^0$ for any $u \in R$. Hence $a+b \in J_M$
and $a-b \in J_M$.

If $a \in J_M$ and $r \in R$ then for every $u \in R$,
$(ra) u u^\ast (r a)^\ast = r (a u u^\ast a^\ast) r^\ast \in M^0$ and
$(ar) u u^\ast (a r)^\ast = a (ru)(ru)^\ast a^\ast \in M^0$, so that
$ra \in J_M$ and $ar \in J_M$. Hence, $J_M$ is a two-sided ideal.

To prove that $M^0 \subseteq J_M$, we must show that
$a u u^\ast a \in M^0$ for every $a \in M^0$ and $u \in R$.
Pick $a \in M^0$ and $u \in R$ and write $z=uu^\ast$,
$x = (1+az)a(1+az)^\ast$ and $y = (1-az)a(1-az)^\ast$.
Note that $x,y \in M^0$ and $4aza = x-y \in M^0$.
Since $aza \in M$, it follows that $aza \in M^0$.
\end{proof}

Recall that a two-sided ideal $J$ of a ring $R$ is \textit{prime}
if for any $a,b \in R$ such that $aRb \subseteq J$ we have that
either $a \in R$ or $b \in R$. A ring $R$ is \textit{prime} if
$(0)$ is a prime ideal of $R$.

\begin{thm}
\label{prime}
If $M$ is a maximal proper quadratic module in $R$,
then the ideal $J_M$ is prime and $\ast$-invariant.
Moreover, $J_M \cap S = M^0$.
\end{thm}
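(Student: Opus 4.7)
The plan is to prove the three assertions in the order $J_M \cap S = M^0$, then $\ast$-invariance of $J_M$, then primality of $J_M$. For the first, the inclusion $M^0 \subseteq J_M \cap S$ is immediate from the preceding Proposition (and $M^0 \subseteq S$). For the reverse, I would take $a \in J_M \cap S$, assume $a \notin M$, and aim for the contradiction $-1 \in M$. By maximality of $M$, there exist $m \in M$ and $p_1,\ldots,p_n \in R$ with $-1 = m + \sum_i p_i a p_i^\ast$. The key trick is to set $c := \sum_i p_i a p_i^\ast = -1-m$; since $J_M$ is an ideal containing $a$ and $a$ is symmetric, $c$ lies in $J_M \cap S$. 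Taking $u=1$ in the defining condition of $J_M$ yields $c^2 \in M^0$, hence $-c^2 \in M$. Expanding with $m=m^\ast$ gives $-c^2 = -(1+m)^2 = -1 - 2m - m^2$, and since $2m + m^2 = 2m + m m^\ast \in M$, adding the two produces $-1 \in M$: contradiction. Running the same argument on $-a \in J_M \cap S$ gives $-a \in M$, so $a \in M^0$.

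With this in hand, $\ast$-invariance follows easily. For $a \in J_M$ and any $v \in R$, the element $a^\ast v v^\ast a = (v^\ast a)^\ast(v^\ast a)$ is symmetric, and the rearrangement
\[
(a^\ast v v^\ast a)\, u u^\ast\, (a^\ast v v^\ast a)^\ast = (a^\ast v v^\ast)\,(a u u^\ast a^\ast)\,(a^\ast v v^\ast)^\ast
\]
shows that it belongs to $J_M$: the inner factor $a u u^\ast a^\ast$ is in $M^0$ by $a \in J_M$, and $M^0$ is closed under $r(\cdot)r^\ast$-sandwich. Thus $a^\ast v v^\ast a \in J_M \cap S = M^0$ for every $v$, which is precisely the condition $a^\ast \in J_M$.

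For primality, I would suppose $aRb \subseteq J_M$ and $b \notin J_M$, and conclude $a \in J_M$. Pick $u \in R$ with $B := (bu)(bu)^\ast \in M \setminus M^0$; then $-B \notin M$, so maximality of $M$ produces $m \in M$, $p_i \in R$ with $\sum_i p_i B p_i^\ast = 1+m$. The hypothesis $arb \in J_M$, specialized to $s=u$ in the defining condition, gives $a r B r^\ast a^\ast \in M^0$ for every $r \in R$. Substituting $r = r' p_i$ and summing over $i$ yields $a r'(1+m) r'^\ast a^\ast \in M^0$, which expands as $(a r')(a r')^\ast + (a r')\, m\, (a r')^\ast \in M^0$. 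Both summands lie in $M$ — the first trivially, the second by sandwich-invariance applied to $m \in M$ — so the support property $x+y \in M^0$ with $x, y \in M$ $\Rightarrow$ $x, y \in M^0$ (recalled at the start of this section) gives $(a r')(a r')^\ast \in M^0$ for every $r'$, i.e., $a \in J_M$.

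The main obstacle is the first step, $J_M \cap S \subseteq M^0$: direct attempts to turn $\sum_i p_i a p_i^\ast = -1-m$ into a contradiction lead nowhere, and the decisive move is to square the symmetric element $c = -1-m$ using the $u=1$ instance of $J_M$-membership, thereby manufacturing a $-c^2 \in M$ whose leading $-1$ can be cancelled against honest $M$-elements. Once that is in place, $\ast$-invariance and primality become fairly mechanical sandwich computations, the latter requiring one further appeal to maximality.
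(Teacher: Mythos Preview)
Your proof is correct and follows essentially the same route as the paper: the squaring trick $c^2=(1+m)^2\in M^0$ for $J_M\cap S=M^0$, then the sandwich computations for $\ast$-invariance and primality (the paper does primality before $\ast$-invariance, but this is immaterial). The one cosmetic difference is that for $\ast$-invariance you verify $a^\ast vv^\ast a\in J_M$ directly from the definition, whereas the paper simply notes this element lies in the ideal $J_M$ and is symmetric, hence in $J_M\cap S=M^0$.
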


\begin{proof}
We already know that $J_M$ is a two-sided ideal containing $M^0$.
Hence, $M^0 \subseteq J_M \cap S$. If $J_M \cap S \not\subseteq M^0$, 
then there exists $s \in J_M \cap S$ such that $s \not\in M^0$.
Replacing $s$ by $-s$ if necessary, we may assume that $-s \not\in M$.
Since $M$ is maximal, it follows that the smallest quadratic module
containing $M$ and $-s$ is not proper. Hence there exist an element
$n \in M$, an integer $l$ and elements $t_1,\ldots,t_l \in R$ such that
$1+n = \sum_{j=1}^l t_j s t_j^\ast$. Since $J_M$ is a two-sided ideal
containing $s$, it follows that $1+n \in J_M$. It follows that
$(1+n) v v^\ast (1+n) \in M^0$ for every $v \in R$. For $v=1$,
we get $(1+n)^2 \in M^0$. Since $n,n^2 \in M$, it follows that
$1 \in -M$, contradicting the assumption that $M$ is proper.
Therefore, $J_M \cap S = M^0$.

To prove that $J_M$ is prime, pick $a,b \in R$
such that $arb \in J_M$ for every $r \in R$.
If $b \not\in J_M$, then there exists $v \in R$
such that $b v v^\ast b^\ast \not\in M^0$.
Since $M$ is maximal and $-b v v^\ast b^\ast \not\in M$,
it follows that the smallest quadratic module containing
$M$ and $-b v v^\ast b^\ast$ is not proper. Hence,
there exist an element $m \in M$, an integer $k$
and elements $r_1,\ldots,r_k \in R$ such that
$1+m = \sum_{i=1}^k r_i (b v v^\ast b^\ast) r_i^\ast$.
Pick $r \in R$ and write $x = a r r^\ast a^\ast$ and $y = a r m r^\ast a^\ast$.
Since $a r r_i b \in J_M$ for every $i=1,\ldots,k$ by assumption and
$J_M \cap S = M^0$ by the first paragraph of this proof, it follows that
$x+y = a r (1+m) r^\ast a^\ast
= \sum_{i=1}^k (a r r_i b) v v^\ast (a r r_i b)^\ast \in M^0$.
Clearly, $x,y \in M$, so that $x,y \in M^0$.
Therefore, $a r r^\ast a^\ast$ for every $r \in R$,
implying that $a \in J_M$.

To show that $J_M$ is $\ast$-invariant, pick $a \in J_M$.
Since $J_M$ is an ideal, it follows that $a^\ast u u ^\ast a \in J_M$
for every $u \in R$. By the first paragraph of this proof, $J_M \cap S = M^0$,
so that $a^\ast u u ^\ast a \in M^0$ for every $u \in R$. So, $a^\ast \in J_M$
by the definition of $J_M$.
\end{proof}

\begin{rem}
If $R$ is a complex $\ast$-algebra and $M$ is a maximal proper
quadratic module in $R$, then $J_M = M^0+i M^0$. Namely, pick
$z \in J_M$ and write $z = x+iy$ where $x,y \in S$. Then $z^\ast
=x-iy$ also belongs to $J_M$. Pick any $r \in R$ and write $s = rr^\ast$.
Since $z s z^\ast \in M^0$ and $z^\ast s z \in M^0$, it follows that 
$2(xsx+ysy)=zsz^\ast+z^\ast sz \in M^0$. Since $xsx, ysy \in M$,
it follows that $xsx, ysy \in M^0$. Hence $x,y \in J_M$.
The relation $J_M \cap S = M^0$ implies that $x,y \in M^0$.
The converse is clear.
\end{rem}

\section{Quadratic modules and rings of fractions}
\label{fracsec}

We assume that the reader is familiar with the definition of a reversible Ore set
and Ore localization,  see Section 1.3 of \cite{cohn}.
The aim of this section is to discuss consequences of the following observation:

\begin{prop}
\label{ore}
Let $M$ be a proper quadratic module in a $\ast$-ring $A$ and let $N$ 
be a $\ast$-invariant reversible Ore set on $A$ such that $N \cap M^0 = \emptyset$.
Let $Q = AN^{-1}$ and $\tilde{M}=\{q \in \sym(Q) \vert \ n q n^\ast \in M$
for some $n \in N\}.$ Then $\tilde{M}$ is a proper quadratic module in $Q$. 
\end{prop}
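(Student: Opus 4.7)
The strategy is to verify, one at a time, each defining property of a proper quadratic module for $\tilde M$: the inclusion $\tilde M \subseteq \sym(Q)$ (built into the definition), the fact that $1 \in \tilde M$ (take $n=1 \in N$), closure under addition, closure under $q \mapsto rqr^\ast$ for $r \in Q$, and finally properness. The first two are immediate; the real work is in the next three.

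For closure under addition I would take $q_1,q_2 \in \tilde M$ with witnesses $n_i \in N$ satisfying $n_i q_i n_i^\ast \in M$, and produce a common left ``denominator'' in $N$. The left Ore condition applied to the pair $(n_2,n_1) \in A \times N$ yields $a' \in A$ and $n' \in N$ with $n' n_2 = a' n_1$. Setting $n := n' n_2$, which lies in $N$ by multiplicativity, we get $n = a_i n_i$ with $a_1 := a' \in A$ and $a_2 := n' \in A$, whence
\[
n(q_1+q_2)n^\ast = a_1(n_1 q_1 n_1^\ast)a_1^\ast + a_2(n_2 q_2 n_2^\ast)a_2^\ast \in M,
\]
so $q_1+q_2 \in \tilde M$.

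For closure under conjugation by $r \in Q$, let $q \in \tilde M$ with $nqn^\ast \in M$. Because $N$ is reversible we have $Q = N^{-1}A$, so the element $rn^{-1} \in Q$ can be rewritten as $m^{-1}s$ with $m \in N$ and $s \in A$; equivalently $mr = sn$. Then
\[
m(rqr^\ast)m^\ast = (mr)\,q\,(mr)^\ast = s(nqn^\ast)s^\ast \in M,
\]
so $rqr^\ast \in \tilde M$.

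Properness is where the hypotheses $N \cap M^0 = \emptyset$ and $\ast$-invariance of $N$ combine: if $-1 \in \tilde M$, then $-nn^\ast \in M$ for some $n \in N$, while $nn^\ast \in N(A) \subseteq M$ unconditionally, forcing $nn^\ast \in M^0$; but $n$ and $n^\ast$ both lie in $N$ and $N$ is multiplicatively closed, so $nn^\ast \in N \cap M^0$, a contradiction. The main obstacle throughout is the Ore-localization bookkeeping in the closure steps, namely ensuring that the auxiliary denominator one builds lies in $N$ rather than merely in $A$, so that the resulting symmetric element of $Q$ genuinely belongs to $\tilde M$; multiplicativity of $N$ together with one application of the Ore condition per step is what makes this possible.
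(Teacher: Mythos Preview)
Your proof is correct and follows essentially the same route as the paper's. Each of the three substantive steps---closure under addition via a common left Ore denominator, closure under conjugation via rewriting $rn^{-1}$ as a left fraction $m^{-1}s$, and properness via $nn^\ast\in N\cap M^0$---matches the paper's argument up to relabelling; you are in fact slightly more explicit than the paper about why $nn^\ast$ lies in both $M$ and $N$.
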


\begin{proof} 
Clearly, the involution extends uniquely from $A$ to $Q$, see also \cite{domo}.
To prove that $\tilde{M}+\tilde{M} \subseteq \tilde{M}$
take $q_1,q_2 \in \tilde{M}$. Pick $n_1,n_2 \in N$ such that
$n_1 q_1 n_1^\ast \in M$ and $n_2 q_1 n_2^\ast \in M$. 
By the Ore property of $N$, there exist $u \in A$ and $v \in N$ 
such that $u n_1 = v n_2$. It follows that
$v n_2(q_1+q_2)(v n_2)^\ast = u(n_1 q_1 n_1^\ast)u^\ast
+v(n_2 q_1 n_2^\ast)v^\ast \in M$. Since $v n_2 \in N$,
we have that $q_1+q_2 \in \tilde{M}$.

Suppose that $q \in \tilde{M}$ and $d = n^{-1} a \in Q$.
Pick $z \in N$ such that $zqz^\ast \in M$. 
By the Ore property of $N$, there exist $b \in A$ and $w \in N$
such that $bz=wa$. Then, $(wn)(dqd^\ast)(wn)^\ast
=waq(wa)^\ast =b(zqz^\ast)b^\ast \in M$ and $wn \in N$.
Hence, $dqd^\ast \in \tilde{M}$.

If $-1 \in \tilde{M}$, then there exists $n \in N$ such that
$-nn^\ast \in M$. It follows that $nn^\ast \in M^0 \cap N$
contrary to the assumption that $M^0 \cap N = \emptyset$.
\end{proof}

Let $A$, $N$ and $Q$ be as in Proposition \ref{ore}. Then for every proper
quadratic module $M'$ in $Q$ we have 
\[
\big(M' \cap \sym(A) \big)\ \tilde{ } = M'. 
\]
On the other hand, for every quadratic module $M$ in $A$ such that 
$M^0 \cap N = \emptyset$, the set 
\[
\overline{M} := \tilde{M} \cap \sym(A) = \{a \in \sym(A) \vert \ n a n^\ast \in M
\text{ for some } n \in N\}
\]
is also a quadratic module in $A$ such that $\overline{M}^0 \cap N = \emptyset$,
which we call the $N$-\textit{closure} of $M$. We say that $M$ is $N$-\textit{closed} if
$M^0 \cap N = \emptyset$ and $M = \overline{M}$. 
Note that $N$-closure is an idempotent operation.

\begin{thm} 
\label{bij}
Let $A,N,Q$ be as above. The mappings $M \mapsto \tilde{M}$
and $M' \mapsto M' \cap \sym(A)$ give a bijective correspondence between 
proper $N$-closed quadratic modules of $A$ and proper quadratic modules in $Q$.
\end{thm}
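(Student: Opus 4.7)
The plan is to verify the bijection by showing that each map lands in the claimed class and that the two compositions are identities. Most of the checks are routine; the substantive step is verifying that $M' \cap \sym(A)$ is $N$-closed whenever $M'$ is proper in $Q$.

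Starting from a proper quadratic module $M'$ in $Q$, set $M := M' \cap \sym(A)$. Then $M$ is clearly a quadratic module in $A$; it is proper because $-1 \in M$ would force $-1 \in M'$; and the equality $M = \overline{M}$ follows easily. The containment $M \subseteq \overline{M}$ is immediate (take the witness $n = 1$), while for $a \in \overline{M}$ with $n a n^\ast \in M \subseteq M'$, invertibility of $n$ in $Q$ gives $a = n^{-1}(n a n^\ast)(n^{-1})^\ast \in M'$, and $a \in \sym(A)$ then places $a$ in $M$.

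The main obstacle is the support condition $M^0 \cap N = \emptyset$. Assume for contradiction that $n \in N$ lies in $M^0 = M'^0 \cap \sym(A)$, so $n = n^\ast$ and $\pm n \in M'$. Since $n$ is invertible in $Q$ and $(n^{-1})^\ast = n^{-1}$, applying the quadratic-module axiom to $\pm n$ with $r = n^{-1}$ gives $\pm n^{-1} \in M'$. The key computation is
\[
(1 + n^{-1})(\pm n)(1 + n^{-1})^\ast = \pm(n + 2 + n^{-1}),
\]
so both $n + 2 + n^{-1}$ and $-(n + 2 + n^{-1})$ lie in $M'$. Adding $n, n^{-1}, -(n+2+n^{-1}) \in M'$ produces $-2 \in M'$, and then $-2 + 1 = -1 \in M'$, contradicting propernes of $M'$.

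For the two compositions: by definition $\widetilde{M} \cap \sym(A) = \overline{M}$, which equals $M$ by the $N$-closed hypothesis. In the other direction, the inclusion $\widetilde{M' \cap \sym(A)} \subseteq M'$ is immediate from the same invertibility trick as above. For the reverse inclusion, given $q \in M' \cap \sym(Q)$ write $q = a n_0^{-1}$ as a right fraction with $a \in A$, $n_0 \in N$; then $n_0^\ast q n_0 = n_0^\ast a$ lies in $A$, is symmetric (since $q$ is), and lies in $M'$ by the quadratic-module property, hence belongs to $M' \cap \sym(A) = M$, witnessing $q \in \widetilde{M}$ via $n_0^\ast \in N$ (using $\ast$-invariance of $N$). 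Combined with Proposition \ref{ore}, which supplies that $\widetilde{M}$ is a proper quadratic module in $Q$ whenever $M$ is proper with $M^0 \cap N = \emptyset$, the two maps are mutually inverse bijections between the indicated classes.
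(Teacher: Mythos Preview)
Your proof is correct and follows the same outline the paper sketches in the paragraph preceding Theorem~\ref{bij}: the paper records the identities $(M'\cap\sym(A))\,\tilde{}=M'$ and $\tilde{M}\cap\sym(A)=\overline{M}$ and the fact that $\overline{M}^0\cap N=\emptyset$, then states the theorem without further argument, so you are simply supplying the routine verifications the paper omits. Your check that $M'^0\cap\sym(A)$ meets $N$ trivially is a pleasant explicit computation (echoing the $(1\pm az)$ trick from the proof of Proposition~1); a marginally shorter route is to note that $n\in M'^0$ symmetric gives $n^{-1}\in M'^0$, hence $n+n^{-1}\in M'^0$, while $n+n^{-1}=n^{-1}+n^{-1}\cdot n\cdot (n^{-1})^\ast+1\cdot n\cdot 1^\ast -1$ forces $-1\in M'$ --- but your version is equally valid.
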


\section{A representation theorem}
\label{repsec}

Suppose that $R$ is a prime Noetherian $\ast$-ring
and $N$ the set of all elements from $R$ that are not zero divisors.
A variant of Goldie's Theorem from \cite{domo} says that 
$N$ is a $\ast$-invariant reversible Ore set, the involution of $A$
extends uniquely to the Ore's localization $Q = RN^{-1}$ and 
there exists a skew--field $D$ such that $Q$ is either isomorphic to $M_n(D)$ 
or $\ast$-isomorphic to $M_n(D) \oplus M_n(D)^{op}$ 
with exchange involution $(a,b)^\ast=(b,a)$.
If $R$ is \textit{real}, i.e. it has a support zero quadratic module, then by Proposition \ref{ore}
this quadratic module extends to a proper quadratic module of $Q$.
Note that $M_n(D) \oplus M_n(D)^{op}$ with involution $(a,b)^\ast=(b,a)$ cannot 
have a proper quadratic module because of the identity $(-1,-1) = (1,-1)(1,-1)^\ast$. Hence:

\begin{prop}
The Goldie ring of fractions of a real prime Noetherian $\ast$-ring
is isomorphic to $M_n(D)$ for an integer $n$ and a skew-field $D$.
\end{prop}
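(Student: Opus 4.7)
The plan is essentially to formalize the argument already sketched in the paragraph preceding the statement. The strategy is to push a witness of realness forward through the Goldie localization using Proposition \ref{ore}, and then use a single direct identity to eliminate one of the two possible shapes for $Q$.

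First I would fix a proper quadratic module $M$ on $R$ with $M^0=\{0\}$, which exists by the definition of real given in the paragraph above the statement. Let $N$ be the set of non-zero-divisors of $R$. By the variant of Goldie's theorem from \cite{domo} just recalled, $N$ is a $\ast$-invariant reversible Ore set, the involution of $R$ extends uniquely to the Ore localization $Q=RN^{-1}$, and $Q$ is either isomorphic to $M_n(D)$ for some integer $n$ and skew-field $D$, or $\ast$-isomorphic to $M_n(D)\oplus M_n(D)^{op}$ with the exchange involution $(a,b)^\ast=(b,a)$. Since $0$ is a zero divisor, $0\notin N$, and since $M^0=\{0\}$ we have $N\cap M^0=\emptyset$. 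Proposition \ref{ore} therefore produces a proper quadratic module $\tilde M$ on $Q$.

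The remaining task is to rule out the second alternative. On $Q'=M_n(D)\oplus M_n(D)^{op}$ with the exchange involution, I would verify the identity $(1,-1)(1,-1)^\ast=(-1,-1)$ directly: using $(1,-1)^\ast=(-1,1)$ and the opposite multiplication $x\cdot_{op}y=yx$ in the second coordinate, we get $(1,-1)(-1,1)=(1\cdot(-1),\,1\cdot_{op}(-1))=(-1,-1)$, which equals $-1$ in $Q'$. Hence $-1\in N(Q')$, so $-1$ lies in every quadratic module on $Q'$, contradicting the existence of the proper quadratic module $\tilde M$. The first alternative must therefore hold, giving $Q\cong M_n(D)$ as claimed.

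There is no substantial obstacle. The only points requiring a little care are (i) noticing that the support-zero hypothesis is exactly what guarantees $N\cap M^0=\emptyset$, so that Proposition \ref{ore} applies, and (ii) correctly bookkeeping the opposite multiplication in the second summand when checking that $-1$ is a hermitian square in the exchange-involution case.
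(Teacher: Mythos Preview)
Your proposal is correct and follows essentially the same route as the paper: invoke the $\ast$-ring version of Goldie's theorem from \cite{domo}, apply Proposition~\ref{ore} to the support-zero quadratic module to obtain a proper quadratic module on $Q$, and eliminate the exchange-involution alternative via the identity $(-1,-1)=(1,-1)(1,-1)^\ast$. You have simply made explicit the verification that $N\cap M^0=\emptyset$ and the bookkeeping for the hermitian-square identity, both of which the paper leaves implicit.
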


The main result of this section is the following representation-theoretic 
characterization of maximal proper quadratic modules in Noetherian $\ast$-rings:

\begin{thm}
\label{rep}
Let $A$ be a Noetherian $\ast$-ring and $M$ a proper quadratic module in $A$.
The following are equivalent:
\begin{enumerate}
\item $M$ is maximal,
\item there exists a simple artinian ring with involution $Q \cong M_n(D)$,
a maximal proper quadratic module $M'$ in $Q$ and 
a $\ast$-ring homomorphism $\pi \colon A \to Q$ such that 
\[
M = \pi^{-1}(M') \cap \sym(A).
\]
\end{enumerate}
\end{thm}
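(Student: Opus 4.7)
Plan. The proof splits into two directions, with essentially all the substance in $(1) \Rightarrow (2)$.

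For $(1) \Rightarrow (2)$, I would first invoke Theorem \ref{prime} to extract from the maximal proper $M$ the prime $\ast$-invariant ideal $J_M$, which satisfies $J_M \cap \sym(A) = M^0$. Setting $\bar A := A/J_M$ with quotient map $\sigma$, the quotient is a prime Noetherian $\ast$-ring; let $\bar M := \sigma(M)$, which is a quadratic module in $\bar A$ because $\sigma$ is a $\ast$-ring homomorphism. I would then verify that $\bar M$ is proper with zero support. Properness: if $-1 \in \bar M$ then $1+m \in J_M \cap \sym(A) = M^0 \subseteq M$ for some $m \in M$, whence $-1 \in M$, contradicting properness of $M$. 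Zero support: if $\sigma(m_1) = -\sigma(m_2)$ with $m_1, m_2 \in M$, then $m_1+m_2 \in J_M \cap \sym(A) = M^0$, and the summand-in-support property of $M^0$ listed at the start of Section \ref{suppsec} then forces $m_i \in M^0 \subseteq J_M$.

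Next I would apply the proposition of Section \ref{repsec}: since $\bar A$ is real, prime, and Noetherian, its Goldie localization $Q := \bar A N^{-1}$ is simple artinian and isomorphic to $M_n(D)$ for some skew-field $D$. Proposition \ref{ore} (applicable since the support of $\bar M$ is $\{0\}$, hence disjoint from $N$) extends $\bar M$ to a proper quadratic module $\widetilde{\bar M}$ in $Q$, which Zorn's lemma then enlarges to a maximal proper $M'$. Taking $\pi := \iota \circ \sigma$ with $\iota : \bar A \hookrightarrow Q$ the localization inclusion, the containment $M \subseteq \pi^{-1}(M') \cap \sym(A)$ is immediate; conversely, the pullback is a proper quadratic module in $A$ (properness: $-1 \in \pi^{-1}(M')$ would give $\pi(-1) = -1 \in M'$, impossible) containing the maximal $M$, so equality holds.

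For $(2) \Rightarrow (1)$, suppose $M_1 \supsetneq M$ is a proper quadratic module in $A$ and consider the quadratic module $M''$ in $Q$ generated by $M' \cup \pi(M_1)$. If $M''$ is proper, maximality of $M'$ forces $M'' = M'$, hence $\pi(M_1) \subseteq M'$ and so $M_1 \subseteq \pi^{-1}(M') \cap \sym(A) = M$, contradicting $M_1 \supsetneq M$. The main obstacle is the alternative case, where $M''$ contains $-1$: one obtains a relation $-1 = m' + \sum_j q_j \pi(a_j) q_j^\ast$ in $Q$ with $q_j \in Q$ and $a_j \in M_1$ that must be converted into a contradiction with properness of $M_1$ in $A$. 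I expect to handle this by choosing $(\pi, Q)$ to be the pair produced by the forward construction, so that $Q$ is the Ore localization of $\sigma(A)$ and the $q_j$ share a common left denominator $\pi(s)^{-1}$ with $\sigma(s)$ a non-zero-divisor in $\bar A$; multiplying through by $\pi(s)$ and $\pi(s)^\ast$ and absorbing the residual element of $\ker \pi \cap \sym(A) = M^0 \subseteq M_1$ into $M_1$ yields a relation in $A$ forcing $-s s^\ast \in M_1$, which together with $s s^\ast \in N(A) \subseteq M_1$ and the non-zero-divisor status of $\sigma(s)$ should deliver the desired contradiction.
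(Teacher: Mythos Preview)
Your argument for $(1)\Rightarrow(2)$ is correct and follows the paper's line: quotient by $J_M$, verify that the image $\bar M$ has zero support, pass to the Goldie quotient ring $Q$, and extend. The only difference is that the paper notes $\bar M$ is already maximal in $\bar A$ (any proper quadratic module above it pulls back to a proper one above $M$ in $A$) and then invokes Theorem~\ref{bij} to conclude that $\widetilde{\bar M}$ is itself maximal in $Q$; you instead enlarge via Zorn and recover the equality $M=\pi^{-1}(M')\cap\sym(A)$ from the maximality of $M$. Both routes work.

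Your treatment of $(2)\Rightarrow(1)$ has a real gap. The data $\pi$, $Q$, $M'$ in (2) are \emph{given}; you are not free to ``choose $(\pi,Q)$ to be the pair produced by the forward construction''. Worse, that forward construction rests on Theorem~\ref{prime}, whose hypothesis is precisely that $M$ is maximal---the thing you are trying to prove---so the proposed fix is circular. The paper dispatches $(2)\Rightarrow(1)$ in one line by citing Theorem~\ref{bij}: the correspondence there is inclusion-preserving, so a maximal proper quadratic module in $Q$ restricts to a maximal $N$-closed proper quadratic module below, and one pulls maximality back through the quotient. Note, however, that Theorem~\ref{bij} concerns the case where $Q$ is an Ore localization, so the paper is implicitly reading (2) with $\pi$ of the specific quotient-then-localize form built in $(1)\Rightarrow(2)$; for a completely arbitrary $\ast$-homomorphism the implication can fail (take $A=\RR[x]$ with trivial involution, $Q=M_2(\RR)$ with transpose, $\pi$ evaluation at $\mathrm{diag}(1,2)$, and $M'$ the positive semidefinite cone: then $\pi^{-1}(M')=\{p:p(1)\ge 0,\ p(2)\ge 0\}$ is not maximal). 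Your instinct that the ``obstacle'' case is genuinely problematic was therefore correct; the intended resolution is to read (2) in the restricted sense and invoke the order-preserving bijection of Theorem~\ref{bij} rather than to clear denominators by hand.
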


\begin{proof} 
$(1) \Rightarrow (2) \colon$ 
If $M$ is maximal then by Theorem \ref{prime} $J_M$ is 
a prime $\ast$-ideal such that $J_M \cap \sym(A) = M^0$. 
Let $j \colon A \to A/J_M$ be the canonical projection. 
Then $j(M)$ is a maximal support zero quadratic module 
on $A/J_M$ and $M = j^{-1}(j(M)) \cap \sym(A)$. 
Let $N$ be the set of all non-zero divisors in $A/J_M$ and $Q = (A/J_M) N^{-1}$.
By Proposition \ref{ore} and Theorem \ref{bij}, $M' = j(M)\ \tilde{ }$
a maximal  proper quadratic module in $Q$, $j(M) = M' \cap j(A)$ 
and $Q$ is a simple artinian ring with involution.
Let $i \colon A/J_M \to Q$ be the canonical imbedding. 
Then $i^{-1}(M') = M' \cap j(A)= j(M)$.
Write $\pi = i \circ j$ and note that $\pi \colon A \to Q$ is a 
$\ast$-ring homomorphism such that $M = \pi^{-1}(M') \cap \sym(A)$.

$(2) \Rightarrow (1) \colon$ Follows from Theorem \ref{bij}.
\end{proof}

Theorem \ref{rep} reduces the study of maximal proper quadratic modules
in Noetherian $\ast$-rings into two parts: the study of their supports and 
the study of maximal proper quadratic modules in simple artinian rings with involution. 
In the special case of PI $\ast$-rings, the simple artinian rings in part two
are finite dimensional (i.e. central simple algebras). Quadratic modules 
on central simple algebras with involution will be studied in a separate paper.

\section{Intersection theorem}
\label{intsec}

Intersection theorems for quadratic modules in commutative rings with trivial involution
have been considered in \cite{jacobi}. Theorem \ref{nicht} generalizes \cite[1.8 Satz]{jacobi}.

\begin{thm}
\label{nicht}
Let $R$ be a semireal ring and $S$ its set of symmetric elements.
For every $x \in S$ and every proper quadratic module $M_0$ the
following are equivalent:
\begin{enumerate}
\item $x \in S \setminus -M$ for every maximal proper quadratic module $M$ containing $M_0$,
\item $-1 \in M_0 - N[x]$, where $M_0-N[x]$ is the smallest quadratic module containing $M_0$ and $-x$.
\end{enumerate}
\end{thm}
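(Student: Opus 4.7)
The plan is to prove the equivalence by splitting into the two directions, with the implication $(2) \Rightarrow (1)$ being a direct contrapositive computation and $(1) \Rightarrow (2)$ requiring a routine Zorn's lemma extension of a proper quadratic module to a maximal one. The quadratic module $M_0 - N[x]$, being by definition the smallest one containing $M_0$ and $-x$, is contained in every quadratic module that contains $M_0$ and $-x$; this is the key monotonicity we will use in both directions.

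For $(2) \Rightarrow (1)$, I would argue by contrapositive. Suppose that some maximal proper quadratic module $M \supseteq M_0$ satisfies $x \in -M$. Then $-x \in M$, and since $M$ is itself a quadratic module containing $M_0$ and $-x$, the minimality of $M_0 - N[x]$ gives $M_0 - N[x] \subseteq M$. If $(2)$ held, then $-1 \in M_0 - N[x] \subseteq M$, contradicting that $M$ is proper. Hence $(2)$ fails, completing this direction.

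For $(1) \Rightarrow (2)$, I would again use the contrapositive. Suppose $-1 \notin M_0 - N[x]$, so that $M_0 - N[x]$ is a proper quadratic module in $R$ containing $M_0$ and $-x$. Apply Zorn's lemma to the family of proper quadratic modules containing $M_0 - N[x]$, ordered by inclusion: this family is nonempty, and the union of any chain is again a quadratic module (all four defining closure conditions pass to unions of chains) that fails to contain $-1$ because $-1$ is absent from every member. Thus there exists a maximal proper quadratic module $M$ with $M \supseteq M_0 - N[x] \supseteq M_0 \cup \{-x\}$. In particular $-x \in M$, i.e.\ $x \in -M$, which shows that $(1)$ fails.

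The only point that demands a brief check is that the union of a chain of proper quadratic modules is again a proper quadratic module, which is immediate from the finitary nature of each defining condition; no appeal to the semireality hypothesis is needed beyond ensuring the statement is non-vacuous. There is no real obstacle here, as the theorem is essentially the abstract Zorn--separation principle combined with the minimality of the quadratic module generated by $M_0$ and $-x$.
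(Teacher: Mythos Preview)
Your proof is correct and follows essentially the same approach as the paper: both directions are argued by contrapositive, using Zorn's Lemma to extend $M_0 - N[x]$ to a maximal proper quadratic module when $-1 \notin M_0 - N[x]$, and using the minimality of $M_0 - N[x]$ to conclude $-1 \notin M_0 - N[x]$ when some maximal $M \supseteq M_0$ has $x \in -M$. Your version is slightly more explicit about why Zorn's Lemma applies, but the argument is the same.
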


\begin{proof}
If (2) is false, then $-1 \not\in M_0-N[x]$, hence $M_0-N[x]$
is a proper quadratic module. By Zorn's Lemma, there exists a maximal
proper quadratic module $M$ containing $M_0-N[x]$. 
The fact that $x \in -M$ implies that (1) is false.
Conversely, if (1) is false, then $x \in -M$ for some maximal proper quadratic module $M$.
It follows that $M_0-N[x] \subseteq M$, so that $-1 \not\in M_0-N[x]$. Hence, (2) is false.
\end{proof}

Intersection theorems are popularly called \text\it{Stellens\" atze}. M. Schweighofer proposed the name
\textit{abstract Nirgendsnegativsemidefinitheitsstellensatz} for our Theorem \ref{nicht}.

In the archimedean case we can reformulate Theorem \ref{nicht}
by using the representation theorem from \cite{cstar}.
Recall that a proper quadratic module $M$
is \textit{archimedean} if for every element $a \in R$ there exists an
integer $n$ such that $n-aa^\ast \in M$. In Section 2 of \cite{cstar},
it is shown that for every archimedean quadratic module $M$ in a  
complex $\ast$-algebra $R$, there exists an $M$-positive irreducible 
$\ast$-representation $\phi_M$ of $R$ on a complex Hilbert space.
Recall that a representation $\phi$ is \textit{$M$-positive} if
$\phi(a)$ is positive semidefinite for every $a \in M$.

\begin{thm}
\label{another}
For every archimedean proper quadratic module $M_0$ in 
a complex $\ast$-algebra $R$ and for every $x \in \sym(R)$, 
the following are equivalent:
\begin{enumerate}
\item For every $M_0$-positive irreducible representation $\psi$ of $R$,
$\psi(x)$ is not negative semidefinite.
\item There exists $k \in \NN$ and $r_1,\ldots,r_k \in R$ 
such that $\sum_{i=1}^k r_i x r_i^\ast \in 1+M_0$.
\end{enumerate}
\end{thm}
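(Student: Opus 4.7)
My plan is to deduce Theorem \ref{another} from Theorem \ref{nicht} together with the representation-theoretic input from \cite{cstar} recalled just before the statement. The first step is a direct unwinding of definitions: the smallest quadratic module $M_0 - N[x]$ containing $M_0$ and $-x$ consists of elements of the form $m - \sum_{i=1}^k r_i x r_i^\ast$ with $m \in M_0$ and $r_i \in R$, so $-1 \in M_0 - N[x]$ is literally the same as $\sum_{i=1}^k r_i x r_i^\ast \in 1 + M_0$. Thus condition (2) of Theorem \ref{another} coincides with condition (2) of Theorem \ref{nicht}, and Theorem \ref{nicht} reduces the task to proving that (1) is equivalent to the intermediate condition
\[
(\ast) \quad x \notin -M \ \text{ for every maximal proper quadratic module } M \supseteq M_0.
\]

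For $(\ast) \Rightarrow (1)$, I would assume that $\psi$ is an $M_0$-positive irreducible $\ast$-representation with $\psi(x)$ negative semidefinite and form the set $M_\psi = \{a \in \sym(R) : \psi(a) \ge 0\}$. This is a proper quadratic module (since $\psi(-1) = -I$ is not positive semidefinite) that contains both $M_0$ and $-x$, so Zorn's Lemma extends it to a maximal proper quadratic module $M \supseteq M_0$ with $x \in -M$, contradicting $(\ast)$.

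For the converse $(1) \Rightarrow (\ast)$, I would argue contrapositively: if $(\ast)$ fails, pick a maximal proper quadratic module $M \supseteq M_0$ with $-x \in M$. The key observation is that $M$ inherits archimedeanness from $M_0$, because for each $a \in R$ some integer $n$ gives $n - aa^\ast \in M_0 \subseteq M$. This puts me in a position to apply the representation theorem from \cite{cstar} to $M$ itself, producing an $M$-positive irreducible $\ast$-representation $\phi_M$. Such a $\phi_M$ is automatically $M_0$-positive, and $-x \in M$ forces $\phi_M(x) \le 0$, violating (1).

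The only nontrivial ingredient in the whole argument is the invocation of the \cite{cstar} representation theorem to produce $\phi_M$, and this is exactly the step that uses the archimedean hypothesis on $M_0$; without it one could still form the maximal $M$ but would have no mechanism to realise it via a Hilbert-space representation. The remaining pieces---the translation via Theorem \ref{nicht} and the Zorn extension of $M_\psi$---are routine bookkeeping.
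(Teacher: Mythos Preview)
Your argument is correct and matches the paper's proof in substance: both directions hinge on exactly the same two ingredients, namely Zorn's Lemma to produce a maximal proper quadratic module containing $M_0-N[x]$, and the representation theorem from \cite{cstar} applied to that (necessarily archimedean) maximal module. The only cosmetic difference is that the paper does not route through Theorem~\ref{nicht} and the intermediate condition $(\ast)$; in particular, for the implication ``(1) false $\Rightarrow$ (2) false'' the paper simply observes that an $M_0$-positive $\psi$ with $\psi(x)\le 0$ is automatically $(M_0-N[x])$-positive, whence $-1\notin M_0-N[x]$, thereby avoiding your detour through $M_\psi=\{a\in\sym(R):\psi(a)\ge 0\}$ and a second Zorn extension.
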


\begin{proof}
If (1) is false, then there exists 
an $M_0$-positive irreducible $\ast$-representation $\phi$
such that $\phi(x)$ is negative semidefinite. 
Hence, $\phi$ is $M_0-N[x]$ positive. If follows that $-1 \not\in M_0-N[x]$,
so that (2) is false.

Conversely, if (2) is false then $-1 \not\in M_0-N[x]$,
so that $M_0-N[x]$ is a proper quadratic module containing $M_0$.
By Zorn's Lemma, there exists a maximal proper quadratic module
$M$ containing $M_0-N[x]$. Clearly, $M$ is archimedean.
By the discussion above, there exists an $M$-positive irreducible 
$\ast$-representation $\phi$ of $R$. Clearly, $\phi$ is $M_0$-positive
and $\phi(-x)$ is positive semidefinite. Hence, (1) is false.
\end{proof}

\section{An application of the intersection theorem}
\label{appsec}

The aim of this section is to prove a variant of Schm\" udgen's Strict 
Positivstellensatz for the Weyl algebra, see \cite[Theorem 1.1]{sch1}.
We will refer to Schm\" udgen's original proof several times.

Recall that the $d$-th Weyl algebra $\mathcal{W}(d)$ is the unital complex
\linebreak
$\ast$-algebra with generators $a_1,\ldots,a_d,a_{-1},\ldots,a_{-d}$, defining relations
\linebreak
$a_k a_{-k}-a_{-k} a_k=1$ for $k=1,\ldots,d$ and
$a_k a_l=a_l a_k$ for $k,l=1,\ldots,d$, $-1,\ldots,-d$, $k \ne -l$ and 
involution $a_k^\ast =a_{-k}$ for $k=1,\ldots,d$. Write $\deg$ for the 
total degree in the generators. The graded algebra that corresponds to the 
filtration by $\deg$  is $\CC[z,\bar{z}]=\CC[z_1,\ldots,z_d,\bar{z}_1,\ldots,\bar{z}_d]$, 
a polynomial algebra in $2d$ complex variables.
Write $N=a_1^\ast a_1+\ldots+a_d^\ast a_d$ and fix $\alpha \in \RR^+\setminus \NN$.
Let $\mathcal{N}$ be the set of all finite products of elements $N+(\alpha+n)1$,
where $n \in \ZZ$. Let $\Phi$ be the Fock-Bargmann representation of $\mathcal{W}(d)$.
It is unitarily equivalent to the Schr\" odinger representation $\pi_0$.
 Let $\mathcal{X}$ be the unital complex $\ast$-algebra 
generated by $y_n=\Phi(N+(\alpha+n)1)^{-1}$ for $n \in \ZZ$ and $x_{kl}=\Phi(a_k a_l) y_0$,
$k,l=1,\ldots,d,-1,\ldots,-d$. By \cite[Lemma 3.1]{sch1}, $N(\mathcal{X})$ is an
archimedean quadratic module in $\mathcal{X}$.

\begin{thm} 
Let $c$ be  a symmetric element of $\mathcal{W}(d)$ with 
even degree $2m$ and let $c_{2m}$ be the polynomial from
$\CC[z,\bar{z}]$ that 
corresponds to the $2m$-th component of $c$. The following
assertions are equivalent:
\begin{enumerate}
\item $\pi_0(c)$ is not negative semidefinite and 
$c_{2m}(z,\overline{z}) > 0$ for all $z \in \mathbb{C}^d$, $z \ne 0$.
\item There exist $k,l \in \NN$ and elements $r_1,\ldots,r_k,s_0,\ldots,s_l \in \mathcal{W}(d)$
such that $\sum_{i=1}^k r_i^\ast c r_i = \sum_{j=0}^l s_j^\ast s_j$ and $s_0 \in \mathcal{N}$
and $\deg(s_0) \ge \deg(s_j)$ for every $j=1,\ldots,l$.
\end{enumerate}
\end{thm}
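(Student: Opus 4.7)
The plan is to deduce both directions from results already proved in the paper, reducing the main implication $(1) \Rightarrow (2)$ to the archimedean intersection theorem \ref{another} applied not to $\mathcal{W}(d)$ itself (whose $N(\mathcal{W}(d))$ is not archimedean) but to the bounded subalgebra $\mathcal{X}$, closely following Schm\"udgen's strategy in \cite{sch1}.

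For $(2) \Rightarrow (1)$ I apply $\pi_0$ to the hypothesis. Since $\alpha \in \RR^+ \setminus \NN$, each factor in $\pi_0(s_0)$ is self-adjoint with spectrum $\{k + \alpha + n : k \in \NN\}$ bounded away from zero, so $\pi_0(s_0)$ is invertible with bounded inverse and $\pi_0(s_0)^\ast \pi_0(s_0) \ge \varepsilon I$ for some $\varepsilon > 0$. If $\pi_0(c)$ were negative semidefinite, the left side of the identity would be $\le 0$ while the right side would be $\ge \varepsilon I$, a contradiction. For $c_{2m}(z, \bar z) > 0$, I pass to the associated graded algebra $\CC[z, \bar z]$: comparing the top-degree components of both sides yields an identity $c_{2m}(z, \bar z) \cdot p(z, \bar z) = |z|^{2 \deg s_0} + q(z, \bar z)$, where $p$ is a sum of squared moduli of the principal symbols of the top-degree $r_i$'s and $q$ similarly for the $s_j$'s with $j \ge 1$ (both non-negative). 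The right side is strictly positive off the origin, forcing both $c_{2m}$ and $p$ to be strictly positive there.

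For $(1) \Rightarrow (2)$, I first lift $c$ to a symmetric element $\hat c \in \mathcal{X}$, the natural choice being $\hat c = y_0^m \Phi(c) y_0^m$ after using the Weyl commutations $[N, a_k] = \mp a_k$ to push the resolvents $\Phi(N+\alpha)^{-1}$ past the generators of $\Phi(c)$ and rewrite $\hat c$ as a noncommutative polynomial in the $y_n$'s and $x_{kl}$'s. Then I apply Theorem \ref{another} to $\hat c$ with $M_0 = N(\mathcal{X})$, archimedean by \cite[Lemma 3.1]{sch1}. Verifying hypothesis (1) of that theorem amounts to checking that $\psi(\hat c)$ is not negative semidefinite for every $N(\mathcal{X})$-positive irreducible $\ast$-representation $\psi$ of $\mathcal{X}$. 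These fall into two families: the restriction $\Phi|_{\mathcal{X}}$ of the Fock--Bargmann representation sends $\hat c$ to the conjugate of $\Phi(c) \cong \pi_0(c)$ by the positive invertible operator $y_0^m$, which is not negative semidefinite by the first clause of (1); and the characters at infinity annihilate each $y_n$ and send $x_{kl}$ to the normalized principal symbol $z_k z_l$, making $\psi(\hat c)$ proportional to $c_{2m}(z, \bar z) > 0$ by the second clause.

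Theorem \ref{another} then produces $\tilde \rho_i, \tilde \sigma_k \in \mathcal{X}$ with $\sum_i \tilde \rho_i \hat c \tilde \rho_i^\ast = 1 + \sum_k \tilde \sigma_k \tilde \sigma_k^\ast$. To finish I clear denominators: using the Ore property of $\mathcal{N}$ in $\mathcal{W}(d)$ (coming from $(N+\beta) a_k = a_k (N + \beta + \epsilon_k)$), I choose $Q \in \mathcal{N}$ of sufficiently large degree that each $\Phi(Q) \tilde \rho_i y_0^m$ and each $\Phi(Q) \tilde \sigma_k$ lies in $\Phi(\mathcal{W}(d))$, multiply through by $\Phi(Q)$ on both sides, and pull back via $\Phi^{-1}$ to obtain $\sum_i w_i c w_i^\ast = Q^2 + \sum_k u_k u_k^\ast$ in $\mathcal{W}(d)$. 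Setting $r_i = w_i^\ast$, $s_0 = Q \in \mathcal{N}$, and $s_j = u_j^\ast$ gives the required $\sum_i r_i^\ast c r_i = s_0^\ast s_0 + \sum_{j \ge 1} s_j^\ast s_j$, with $\deg s_j \le \deg s_0$ automatic because every element of $\mathcal{X}$ has non-positive degree in the Weyl filtration. The main obstacle I anticipate is the classification of irreducible $N(\mathcal{X})$-positive representations: proving that the Fock--Bargmann representation together with the characters at infinity exhaust them requires a $C^\ast$-algebraic analysis of the closure of $\mathcal{X}$, essentially the fact that its Calkin-type quotient is commutative with spectrum the sphere at infinity, and is where the proof leans hardest on \cite{sch1}.
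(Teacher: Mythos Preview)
Your plan coincides with the paper's proof: both pass from $\mathcal{W}(d)$ to the bounded $\ast$-subalgebra $\mathcal{X}$, invoke Theorem~\ref{another} with $M_0=N(\mathcal{X})$ (archimedean by \cite[Lemma~3.1]{sch1}), analyse the resulting representation via \cite[Sections~4--5]{sch1}, and finally clear denominators using $\Phi(a_j)y_k=y_{k\pm1}\Phi(a_j)$. Your ``two families'' of irreducibles are exactly the two summands in Schm\"udgen's decomposition $\pi=\pi_1\oplus\pi_\infty$; for an irreducible $\pi$ one summand vanishes, so the paper's contrapositive argument and your direct verification of hypothesis~(1) are the same. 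The $(2)\Rightarrow(1)$ direction is likewise identical: apply $\pi_0$ for the first clause, pass to the associated graded for the second.

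One technical point you pass over: the element $\hat c=y_0^{m}\Phi(c)y_0^{m}$ need not lie in $\mathcal{X}$ for every even degree~$2m$. Because $\mathcal{X}$ is generated by the $y_n$ and the quadratic elements $x_{kl}=\Phi(a_ka_l)y_0$, membership is governed by a $\bmod\ 4$ parity; accordingly the paper only applies \cite[Lemma~3.2]{sch1} when $\deg c=4n$ (using $y_0^{n}$, not $y_0^{m}$) and reduces the case $\deg c=4n+2$ to this one by first replacing $c$ with $\sum_{j=1}^{d}a_jca_j^{\ast}$. Once you insert this case split your argument goes through. Your description of the ``characters at infinity'' is also a bit schematic --- in \cite{sch1} the $\pi_\infty$ piece acts by $\pi_\infty(\tilde c)=\int_{S^d} c_{2m}(z,\bar z)\,dE$, and deriving this requires more than evaluating $y_n\mapsto 0$ --- but you correctly flag that this is precisely where the argument leans on \cite{sch1}.
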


\begin{proof}
$(1) \Rightarrow (2) \colon$
If $c$ has degree $4n$ then by \cite[Lemma 3.2]{sch1} the element
$\tilde{c} := y_0^n \Phi(c) y_0^n$ belongs to $\mathcal{X}$.
The main part of the proof is to show that there exist elements 
$h_i \in \mathcal{X}$ such that $\sum_i h_i \tilde{c} h_i^\ast \in 1+ N( \mathcal{X} )$. 
(Then we get (2) by clearing out the denominators using the identities 
$\Phi(a_j)y_k=y_{k+1}\Phi(a_j)$ and $\Phi(a_j)^\ast y_k=y_{k-1} \Phi(a_j)^\ast$.) 
If this claim is false, then by Theorem \ref{another},
there exists a representation $\pi$ of $\mathcal{X}$ such that 
$\pi(\tilde{c})$ is negative semidefinite. By \cite[Section 4]{sch1}, we know that 
$\pi$ can be decomposed as $\pi_1 \oplus \pi_\infty$ where $\pi_1$ is a sum 
of \lq\lq identity\rq\rq{} representations and $\pi_\infty$ is an integral representation 
defined by $\pi_\infty(\tilde{c}) = \int_{S^d} c(z,\bar z) dE(z,\bar z)$
where $E$ is a spectral measure on the sphere $S^d$.
By \cite[Section 5]{sch1}, we have that $\pi_\infty(\tilde{c}) = \pi_\infty(c_{4n})$
where $c_{4n}$ is the leading term of $c$. Since $c_{4n}(z,\bar z) > 0$
for every $z \in S^d$ by the second assumption, 
there exists by the compactness of $S^d$ 
an $\epsilon > 0$ such that $c_{4n} \ge \epsilon$. 
It follows that $\langle \pi_\infty(\tilde{c}) \phi, \phi \rangle  
\ge \int_{S^d} \epsilon d \langle E(z,\bar z) \phi, \phi \rangle
= \epsilon \int_{S^d} d \Vert E(z,\bar z) \phi \Vert^2 = 
\epsilon \Vert \phi \Vert^2$ for every $\phi \in L^2(\RR^d)$.
Since $\pi(\tilde c)$ is negative semidefinite and $\pi_\infty(\tilde c)$ 
is positive definite, it follows that $\pi_1(\tilde c)$ is nontrivial and 
negative definite. Since $\pi_1$ is a direct sum of identity representations,
it follows that $\tilde c < 0$. Since $y_0^n$ has dense image, 
it follows that $\Phi(c) \le 0$. Hence $\pi_0(c) \le 0$ by the
unitary equivalence of $\Phi$ and $\pi_0$, a contradiction
with assumption (1).
If $c$ has degree $4n+2$, then we replace $c$ by $\sum_{j=1}^d a_j c a_j^\ast$
and proceed as above.

$(2) \Rightarrow (1) \colon$ 
Since $s_0 \in \mathcal{N}$, $\pi_0(s_0)$ is invertible.
It follows that $\pi_0(\sum_{j=0}^l s_j^\ast s_j)>0$. 
Since $\sum_{i=1}^k r_i^\ast c r_i= \sum_{j=0}^l s_j^\ast s_j$ it follows that also
$\pi_0(\sum_{i=1}^k r_i^\ast c r_i)>0$. Hence $c \not\le 0$ as claimed.
Write $t=\deg(s_0)$ and note that 
$(s_0^\ast s_0)_{2t}(z,\bar{z}) = (\sum_{n=1}^d z_n \bar{z}_n)^t > 0$ for $z \ne 0$,
Since $t \ge \deg(s_j)$ for every $j=1,\ldots,l$,
it follows that $(\sum_{j=0}^l s_j^\ast s_j)_{2t}(z,\bar{z}) >0$ for $z \ne 0$.
From $\sum_{i=1}^k r_i^\ast c r_i= \sum_{j=0}^l s_j^\ast s_j$ and 
$
(\sum_{i=1}^k r_i^\ast c r_i)_{2t}(z,\overline{z})=
(\sum_{i=1}^k r_i^\ast r_i)_{2t-2m}(z,\overline{z})
c_{2m}(z,\overline{z})
$
we get that $c_{2m}(z,\overline{z}) > 0$ for $z\ne 0$.
\end{proof}

\begin{rem}
In \cite[Theorem 1.1]{sch1} both (1) and (2) are stronger.
In (1) the assumption $\pi_0(c) \not\le 0$ is replaced by $\pi_0(c-\epsilon \cdot 1)>0$
for some $\epsilon$ and in (2) the bound $k \le d$ is provided. The implication
from (2) to (1) is not considered.
\end{rem}

We believe that the main result of \cite{sch2} can be extended in a similar way.
It would also be interesting to know whether the second part of assertion (1)
($c_{2m}>0$) implies the first part ($\pi_0(c) \not\le 0$).

\end{document}